\author{Carlo Sanna}
\thanks{\mbox{The author is supported by a postdoctoral fellowship of INdAM and is a member of the INdAM group GNSAGA}}
\address{Universit\`a degli Studi di Genova\\Department of Mathematics\\Genova, Italy}
\email{carlo.sanna.dev@gmail.com}
\keywords{prime factorization; squarefree numbers; powerful number}
\subjclass[2010]{Primary: 11N25, Secondary: 11N37, 11N64.}
\title[On the number of distinct exponents in the prime factorization of an integer]{On the number of distinct exponents in\\ the prime factorization of an integer}
\newtheorem{thm}{Theorem}[section]
\newtheorem{cor}{Corollary}[section]
\newtheorem{lem}[thm]{Lemma}
\theoremstyle{remark}
\begin{document}

\begin{abstract}
Let $f(n)$ be the number of distinct exponents in the prime factorization of the natural number $n$.
We prove some results about the distribution of $f(n)$.
In particular, for any positive integer $k$, we obtain that
\begin{equation*}
\#\{n \leq x : f(n) = k\} \sim A_k x
\end{equation*}
and
\begin{equation*}
\#\{n \leq x : f(n) = \omega(n) - k\} \sim \frac{B x (\log \log x)^k}{k! \log x} ,
\end{equation*}
as $x \to +\infty$, where $\omega(n)$ is the number of prime factors of $n$ and $A_k, B > 0$ are some explicit constants.
The latter asymptotic extends a result of Akta\c{s} and Ram~Murty about numbers having mutually distinct exponents in their prime factorization.
\end{abstract}

\maketitle

\section{Introduction}

Let $n = p_1^{a_1} \cdots p_s^{a_s}$ be the factorization of the natural number $n > 1$, where $p_1 < \cdots < p_s$ are prime numbers and $a_1, \dots, a_s$ are positive integers.
Several functions of the exponents $a_1, \ldots, a_s$ have been studied, including: their product~\cite{MR0291104}, their arithmetic mean~\cite{MR1319087, MR0332683, MR567210, MR0252311}, and their maximum and minimum~\cite{MR2212332, MR0241373, MR2265998, MR0437438}.
See also~\cite{MR1436027, MR0279056} for more general functions.

Let $f$ be the arithmetic function defined by $f(1) := 0$ and $f(n) := \#\{a_1, \dots, a_s\}$ for all natural numbers $n > 1$.
In other words, $f(n)$ is the number of distinct exponents in the prime factorization of $n$.
The first values of $f(n)$ are listed in sequence A071625 of OEIS~\cite{OEIS}.

Our first contribution is a quite precise result about the distribution of $f(n)$.

\begin{thm}\label{thm:fdistribution}
There exists a sequence of positive real numbers $(A_k)_{k \geq 1}$ such that, given any arithmetic function $\phi$ satisfying $|\phi(k)| < a^k$ for some fixed $a > 1$, we have that the series
\begin{equation}\label{equ:Mphi}
M_\phi := \sum_{k \,=\, 1}^\infty A_k \phi(k)
\end{equation}
converges and
\begin{equation*}
\sum_{n \,\leq\, x} \phi(f(n)) = M_\phi x + O_{a,\varepsilon}(x^{1/2 + \varepsilon}) ,
\end{equation*}
for all $x \geq 1$ and $\varepsilon > 0$.
\end{thm}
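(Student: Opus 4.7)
The approach is to exploit the fact that $f(n)$ depends on the factorization in a very simple way once we separate primes with exponent $1$ from primes with exponent $\geq 2$. Write each $n \geq 1$ uniquely as $n = uv$, where $u$ is a powerful number (so every prime dividing $u$ does so to a power at least $2$; $u = 1$ is allowed) and $v$ is squarefree with $\gcd(u,v) = 1$. Let $g(u)$ denote the number of distinct exponents in the prime factorization of $u$, with $g(1) := 0$. Then a direct inspection of the exponent multiset gives
\begin{equation*}
f(uv) = g(u) + [v > 1],
\end{equation*}
since when $v > 1$ the exponent $1$ coming from the primes of $v$ is distinct from every exponent appearing in $u$.

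Interchanging the order of summation then yields
\begin{equation*}
\sum_{n \leq x} \phi(f(n)) = \sum_{\substack{u \leq x \\ u \text{ powerful}}} \Bigl[\phi(g(u)) + \phi(g(u)+1)\bigl(S_u(x/u)-1\bigr)\Bigr],
\end{equation*}
where $S_u(y) := \#\{v \leq y : v \text{ squarefree},\, \gcd(u,v) = 1\}$. A standard M\"obius computation produces $S_u(y) = c_u\, y + O\bigl(2^{\omega(u)}\sqrt{y}\bigr)$ with $c_u := (6/\pi^2)\prod_{p \mid u} p/(p+1)$. Inserting this expansion, the candidate main term is $x \sum_{u \text{ powerful}} c_u\, \phi(g(u)+1)/u$, and grouping by the value of $g(u)+1 = k$ defines
\begin{equation*}
A_k := \sum_{\substack{u \text{ powerful} \\ g(u) = k-1}} \frac{c_u}{u} > 0,
\end{equation*}
so that $M_\phi = \sum_{k \geq 1} A_k \phi(k)$. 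The absolute convergence of $M_\phi$ whenever $|\phi(k)| < a^k$ reduces to the convergence of the Euler product $\sum_{u \text{ powerful}} a^{\omega(u)}/u = \prod_p\bigl(1 + a/(p(p-1))\bigr)$.

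All error terms are controlled by two standard ingredients: $\#\{u \leq y : u \text{ powerful}\} \ll \sqrt{y}$ and $b^{\omega(u)} \ll_{b,\varepsilon} u^\varepsilon$ for every $b \geq 1$ and $\varepsilon > 0$. They handle, in turn, (i) the tail $x \sum_{u > x,\, u \text{ powerful}} c_u\, \phi(g(u)+1)/u$, which by partial summation is $\ll x \cdot x^{-1/2+\varepsilon}$; (ii) the contribution from the remainder of $S_u$, namely $\sum_{u \leq x,\, u \text{ powerful}} \phi(g(u)+1)\cdot O(2^{\omega(u)}\sqrt{x/u}) \ll \sqrt{x}\sum_{u \leq x,\, u \text{ powerful}} (2a)^{\omega(u)}/\sqrt{u} \ll x^{1/2+\varepsilon}$; and (iii) the residual pieces $\sum_{u \leq x,\, u \text{ powerful}}\bigl(\phi(g(u))-\phi(g(u)+1)\bigr)$, bounded by $\sum_{u \leq x,\, u \text{ powerful}} O(a^{\omega(u)}) \ll_\varepsilon x^{1/2+\varepsilon}$.

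The main obstacle is not any single deep estimate but the careful bookkeeping of these contributions, each of which must be shown to stay under $x^{1/2+\varepsilon}$; they all ultimately rest on the interplay between the $\sqrt{y}$-density of powerful numbers and the subpolynomial growth of $a^{\omega(u)}$. Once these bounds are in hand, the identification of the $A_k$ and the convergence of $M_\phi$ follow cleanly.
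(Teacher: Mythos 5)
Your proposal is correct and follows essentially the same route as the paper: the decomposition $n=uv$ into a powerful part and a coprime squarefree part, the observation that $f(n)$ equals the exponent-count of $u$ plus $[v>1]$, the count of coprime squarefree numbers with density $\tfrac{6}{\pi^2}\cdot\tfrac{u}{\psi(u)}$, and error control via $\#\mathcal{P}(y)\ll y^{1/2}$ together with $a^{\omega(u)}\ll_{a,\varepsilon}u^{\varepsilon}$; your $A_k$ coincides with the paper's $\tfrac{6}{\pi^2}\sum_{\ell\in\mathcal{P},\,f(\ell)=k-1}1/\psi(\ell)$. The only (harmless) organizational difference is that you carry the weights $\phi(g(u)+1)$ along directly and prove convergence of $M_\phi$ via the Euler product $\prod_p(1+a/(p(p-1)))$, whereas the paper first establishes the asymptotic for each $N_k(x)$, proves $A_k\le\tfrac{6}{\pi^2(k-1)!}$, and then truncates the sum over $k$ at $\asymp\log x/\log\log x$.
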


From Theorem~\ref{thm:fdistribution} it follows immediately that all the moments of $f$ are finite and that $f$ has a limiting distribution.
In particular, we highlight the following corollary:

\begin{cor}\label{cor:fAk}
For each positive integer $k$, we have
\begin{equation*}
\#\{n \leq x : f(n) = k\} = A_k x + O_{\varepsilon}(x^{1/2 + \varepsilon}) ,
\end{equation*}
for all $x \geq 1$ and $\varepsilon > 0$.
\end{cor}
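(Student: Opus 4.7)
The plan is to derive Corollary~\ref{cor:fAk} as an immediate specialization of Theorem~\ref{thm:fdistribution}. Fix a positive integer $k$ and take $\phi$ to be the indicator function of the singleton $\{k\}$, that is, $\phi(k) := 1$ and $\phi(j) := 0$ for every $j \neq k$. Then $\phi(f(n))$ equals $1$ exactly when $f(n) = k$ and vanishes otherwise, so
\begin{equation*}
\sum_{n \,\leq\, x} \phi(f(n)) = \#\{n \leq x : f(n) = k\} ,
\end{equation*}
which is precisely the counting function we wish to estimate.

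Next I would check that $\phi$ satisfies the growth hypothesis of Theorem~\ref{thm:fdistribution}. Since $|\phi(j)| \leq 1$ for every $j \geq 1$, the bound $|\phi(j)| < a^j$ holds with, say, $a = 2$, so Theorem~\ref{thm:fdistribution} applies. The constant $M_\phi$ defined in~\eqref{equ:Mphi} collapses to the single surviving term $A_k$, because every other summand is zero. Substituting into the conclusion of the theorem yields exactly
\begin{equation*}
\#\{n \leq x : f(n) = k\} = A_k x + O_\varepsilon(x^{1/2+\varepsilon}) ,
\end{equation*}
where the implied constant no longer depends on $a$ since $a$ has been fixed.

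There is essentially no obstacle here, because the whole content of the corollary is already packaged inside Theorem~\ref{thm:fdistribution}; extracting it only requires choosing a bounded test function. The positivity clause in the statement of Theorem~\ref{thm:fdistribution}, namely that $A_k > 0$ for each $k$, then ensures that the main term is genuinely nonzero, so the asymptotic $\#\{n \leq x : f(n) = k\} \sim A_k x$ advertised in the abstract follows with no further argument.
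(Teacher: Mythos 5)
Your derivation is correct and is exactly the specialization the paper intends: the corollary is stated as an immediate consequence of Theorem~\ref{thm:fdistribution}, obtained by taking $\phi$ to be the indicator of $\{k\}$ so that $M_\phi = A_k$. (In fact the paper's proof of the theorem establishes the corollary directly as its first step, equation~\eqref{equ:Nkxdef}, with the slightly sharper error term $O_\varepsilon(x^{1/2+\varepsilon/2})$, but your route through the theorem's statement is equally valid.)
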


We provides also a formula for $A_k$.
Before stating it, we need to introduce some notation.
Let $\psi$ be the Dedekind function, defined by
\begin{equation*}
\psi(n) := n \prod_{p \,\mid\, n} \left(1 + \frac1{p}\right)
\end{equation*}
for each positive integer $n$, and let $(\rho_k)_{k \geq 1}$ be the family of arithmetic functions supported on squarefree numbers and satisfying
\begin{equation*}
\rho_1(n) = \begin{cases}1 & \text{ if } n = 1, \\ 0 & \text{ if } n > 1,\end{cases} \quad\quad \rho_{k + 1}(n) = \begin{cases}0 & \text{ if } n = 1, \\ \frac1{n - 1} \sum_{\substack{d \,\mid\, n \\ d \,<\, n}} \rho_k(d) &\text{ if } n > 1,\end{cases}
\end{equation*}
for all squarefree numbers $n$ and positive integers $k$.

\begin{thm}\label{thm:Ak}
We have
\begin{equation*}
A_k = \frac{6}{\pi^2} \sum_{n \,=\, 1}^{\infty} \frac{\rho_k(n)}{\psi(n)} 
\end{equation*}
for each positive integer $k$.
\end{thm}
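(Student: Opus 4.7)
The plan is to start from the asymptotic $\#\{n \leq x : f(n) = k\} \sim A_k x$ supplied by Corollary~\ref{cor:fAk} and to identify $A_k$ by decomposing every $n > 1$ uniquely as $n = m r$ with $m$ squarefree, $r$ powerful, and $\gcd(m,r) = 1$. Under this writing $f(n) = f(r) + 1$ when $m > 1$ and $f(n) = f(r)$ when $m = 1$, and the contribution from $m = 1$ (i.e., $n$ powerful) to the counting function is negligible since powerful integers up to $x$ are $O(\sqrt{x})$. Applying the classical estimate
\[
\#\{m \leq y : m \text{ squarefree},\ \gcd(m,r) = 1\} = \frac{6}{\pi^2} \cdot \frac{r}{\psi(r)}\, y + O(\sqrt{y}),
\]
summing over powerful $r$ with $f(r) = k-1$, and dividing by $x$, I obtain
\[
A_k = \frac{6}{\pi^2}\, T_{k-1}, \qquad T_{k-1} := \sum_{\substack{r \text{ powerful} \\ f(r) = k-1}} \frac{1}{\psi(r)},
\]
with convergence ensured by $1/\psi(r) \leq 1/r$ and $\sum_{r \text{ powerful}} 1/r < \infty$.

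To evaluate $T_{k-1}$, I use the unique representation $r = s_1^{e_1} \cdots s_{k-1}^{e_{k-1}}$, where $s_1, \dots, s_{k-1} > 1$ are squarefree and pairwise coprime and $2 \leq e_1 < \cdots < e_{k-1}$, together with the identity $\psi(s^e) = s^{e-1} \psi(s)$ for squarefree $s$ and the multiplicativity of $\psi$. Setting $f_i := e_i - 1 \geq 1$, the exponent sum factorises as a nested geometric sum that I telescope from the inside out, yielding
\[
\sum_{1 \leq f_1 < \cdots < f_{k-1}} \prod_{i=1}^{k-1} s_i^{-f_i} = \prod_{j=1}^{k-1} \frac{1}{s_j s_{j+1} \cdots s_{k-1} - 1}.
\]

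Finally, I group by $n := s_1 \cdots s_{k-1}$. For fixed squarefree $n$, the tuples $(s_1, \dots, s_{k-1})$ with pairwise coprime squarefree $s_i > 1$ and $s_1 \cdots s_{k-1} = n$ are in bijection with strict divisor chains $1 = d_0 < d_1 < \cdots < d_{k-1} = n$ of $n$ via $d_i := s_{k-i} s_{k-i+1} \cdots s_{k-1}$; under this bijection the factors $s_j \cdots s_{k-1} - 1$ become $d_i - 1$, and $\prod_i \psi(s_i) = \psi(n)$ by multiplicativity. Thus
\[
T_{k-1} = \sum_{n \text{ squarefree}} \frac{1}{\psi(n)} \sum_{\substack{1 = d_0 < d_1 < \cdots < d_{k-1} = n \\ d_i \mid n}} \prod_{i=1}^{k-1} \frac{1}{d_i - 1},
\]
and a short induction on $k$ against the recursion defining $\rho_k$ identifies the inner sum with $\rho_k(n)$, yielding the claimed formula. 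The main obstacle is the bookkeeping of the telescoping sum together with the bijective re-encoding of the result as a chain sum in the squarefree divisor lattice, which is precisely what causes the otherwise opaque recursion for $\rho_k$ to emerge.
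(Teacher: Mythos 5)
Your proposal is correct and follows essentially the same route as the paper: it starts from the identity $A_k = \tfrac{6}{\pi^2}\sum_{\ell \text{ powerful},\, f(\ell)=k-1} 1/\psi(\ell)$ (the paper's equation \eqref{equ:Akfell}, obtained from the same $n = m\ell$ decomposition), factors each such powerful number into pairwise coprime squarefree parts with strictly increasing exponents, telescopes the nested geometric sum exactly as in Lemma~\ref{lem:x1xk}, and identifies the resulting sum over factorizations of $n$ with $\rho_k(n)$ by the same induction. Your reformulation of that last step as a sum over divisor chains $1 = d_0 < d_1 < \cdots < d_{k-1} = n$ is only a cosmetic re-indexing of the paper's argument.
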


Clearly, $f(n) \leq \omega(n)$ for all positive integers $n$, where $\omega(n)$ denotes the number of prime factors of $n$.
Motivated by a question of Recam{\'a}n Santos~\cite{MO}, 
Akta\c{s} and Ram~Murty~\cite{MR3660343} studied the natural numbers $n$ such that all the exponents in their prime factorization are distinct, that is, $f(n) = \omega(n)$.
They called such numbers \emph{special numbers} (sequence A130091 of OEIS~\cite{OEIS}) and they proved the following:

\begin{thm}\label{thm:special}
The number of special numbers not exceeding $x$ is
\begin{equation*}
\frac{Bx}{\log x} + O\!\left(\frac{x}{(\log x)^2}\right) ,
\end{equation*}
for all $x \geq 2$, where
\begin{equation*}
B := \sum_{\ell} \frac1{\ell}
\end{equation*}
and the sum of over natural numbers $\ell$ that are powerful and special.
\end{thm}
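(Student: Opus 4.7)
The plan rests on the structural observation that if $n$ is special then the exponent $1$ can appear at most once among its prime exponents. Consequently $n$ admits a unique factorization $n = p\ell$, where $\ell$ is a powerful special number (with the convention $\ell = 1$) and $p$ is either $1$ (giving $n = \ell$) or a prime not dividing $\ell$; conversely, every such pair $(p,\ell)$ produces a special number. This translates the counting problem into
\begin{equation*}
S(x) := \#\{n \leq x : n \text{ special}\} = \sum_{\substack{\ell \leq x \\ \ell \text{ powerful and special}}} \Bigl(1 + \pi(x/\ell) + O(\omega(\ell))\Bigr),
\end{equation*}
where the $O(\omega(\ell))$ absorbs the correction for primes dividing $\ell$.

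Next I would apply the prime number theorem in the form $\pi(y) = y/\log y + O(y/(\log y)^2)$ together with the expansion
\begin{equation*}
\frac{1}{\log(x/\ell)} = \frac{1}{\log x} + O\!\left(\frac{\log \ell}{(\log x)^2}\right), \qquad \ell \leq x^{1/2}.
\end{equation*}
The classical bound $\#\{\ell \leq y : \ell \text{ powerful}\} \ll \sqrt{y}$, combined with partial summation, guarantees both the convergence of $\sum 1/\ell$ over powerful $\ell$ and the tail estimate $\sum_{\ell > Y,\, \text{powerful}} 1/\ell \ll Y^{-1/2}$. Applying the expansion on the range $\ell \leq Y := (\log x)^2$ and completing the resulting series to the full sum defining $B$ produces the main term $Bx/\log x$ with error $O(x/(\log x)^2)$.

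To finish, the moderate range $Y < \ell \leq x^{1/2}$ contributes at most $(x/\log x) \sum_{\ell > Y,\, \text{powerful}} 1/\ell \ll x/(\log x)^2$, the large range $x^{1/2} < \ell \leq x$ is handled trivially by $\pi(x/\ell) \leq x/\ell$ together with $\sum_{\ell > x^{1/2},\, \text{powerful}} 1/\ell \ll x^{-1/4}$ to give $O(x^{3/4})$, and the collective contribution of the $O(\omega(\ell))$ and $+1$ terms is at most $O(\sqrt{x}\log x)$ using $\omega(\ell) \ll \log \ell$. All of these are absorbed into $O(x/(\log x)^2)$, yielding the claimed asymptotic.

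The most delicate point, in my view, will be ensuring that no error term develops an extra factor of $\log \log x$: this forces the expansion of $1/\log(x/\ell)$ to be kept to leading order, relying on the convergence of $\sum (\log \ell)/\ell$ over powerful $\ell$ to control the linear correction, and requires a careful balance in the threshold $Y$ so that the tail of $\sum 1/\ell$ and the validity range of the PNT expansion match cleanly.
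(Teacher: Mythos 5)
Your argument is correct. Note that the paper does not actually prove Theorem~\ref{thm:special} --- it is quoted from Akta\c{s} and Ram~Murty --- but your decomposition $n = p\ell$ with $\ell$ powerful and special is exactly the squarefree-times-powerful splitting $n = m\ell$ that the paper uses to prove the generalization, Theorem~\ref{thm:gdistribution}, specialized to $g(n)=0$ (where the squarefree part $m$ must be $1$ or a single prime coprime to $\ell$). The only genuine difference is quantitative: by invoking the Prime Number Theorem directly, expanding $1/\log(x/\ell)$ only for $\ell \leq Y = (\log x)^2$, and using the tail bound $\sum_{\ell > Y,\,\ell\,\text{powerful}} 1/\ell \ll Y^{-1/2} = 1/\log x$ together with the convergence of $\sum (\log\ell)/\ell$ over powerful $\ell$, you recover the full error term $O(x/(\log x)^2)$ claimed in the statement, whereas the paper's uniform treatment of all $k$ via Lemma~\ref{lem:landau} yields only the weaker relative error $O(1/\log\log x)$ when specialized to $k=0$.
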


Let $g$ be the arithmetic function defined by $g(n) := \omega(n) - f(n)$ for all positive integers $n$.
Hence, by the previous observation, $g$ is a nonnegative function and $g(n) = 0$ if and only if $n$ is a special number.
We prove the following result about $g$, which extends Theorem~\ref{thm:special} and it is somehow dual to Corollary~\ref{cor:fAk}.

\begin{thm}\label{thm:gdistribution}
For each nonnegative integer $k$, we have
\begin{equation*}
\#\{n \leq x : g(n) = k\} = \frac{Bx(\log \log x)^k}{k!\log x} \left(1 + O_k\!\left(\frac1{\log \log x}\right)\right) ,
\end{equation*}
for all $x \geq 3$.
\end{thm}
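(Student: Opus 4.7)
The plan is to generalize the proof of Theorem~\ref{thm:special}, which is the case $k = 0$ of our statement. Every positive integer $n$ factors uniquely as $n = m\ell$ with $m$ squarefree, $\ell$ powerful, and $\gcd(m, \ell) = 1$: the primes of $m$ are precisely those appearing in $n$ with exponent $1$. Consequently, the distinct exponents of $n$ are the distinct exponents of $\ell$, together with the exponent $1$ when $m > 1$. Hence, if $m > 1$ then $f(n) = f(\ell) + 1$ and $\omega(n) = \omega(m) + \omega(\ell)$, giving
\begin{equation*}
g(n) = \omega(m) + g(\ell) - 1,
\end{equation*}
while if $m = 1$ then $g(n) = g(\ell)$. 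The contribution of $m = 1$ to $\#\{n \leq x : g(n) = k\}$ is bounded by the number of powerful integers up to $x$, which is $O(x^{1/2})$, and is absorbed into the error.

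After this reduction the count becomes, up to $O(x^{1/2})$,
\begin{equation*}
\sum_{j = 0}^{k}\, \sum_{\substack{\ell \leq x, \, \ell \text{ powerful} \\ g(\ell) = j}} \pi^{\sharp}_{k+1-j}(x/\ell;\, \ell),
\end{equation*}
where $\pi^{\sharp}_r(y;\ell)$ denotes the number of squarefree $m \leq y$ with $\omega(m) = r$ and $\gcd(m,\ell) = 1$. The key analytic input is the uniform Landau-type asymptotic
\begin{equation*}
\pi^{\sharp}_r(y;\ell) = \frac{y(\log \log y)^{r-1}}{(r-1)!\,\log y}\left(1 + O_r\!\left(\frac{1}{\log \log y}\right)\right),
\end{equation*}
valid for $y \geq 3$ and $\ell$ not too large: for $r = 1$ this is the Prime Number Theorem with the $O(\omega(\ell))$ primes dividing $\ell$ removed, while for $r \geq 2$ it is Landau's theorem, the coprimality condition modifying the relevant Dirichlet series only by a bounded Euler factor at the primes of $\ell$, and thus affecting only the lower-order terms.

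To exploit this, I would split the outer sum at $L := (\log x)^A$ for a large constant $A$. For $\ell \leq L$ one has $\log(x/\ell) = \log x + O(\log \log x)$ and $\log \log (x/\ell) = \log \log x + O(1)$, so the inner count evaluates to
\begin{equation*}
\frac{x(\log \log x)^{k-j}}{\ell \, (k-j)! \log x}\left(1 + O_k\!\left(\frac{1}{\log \log x}\right)\right).
\end{equation*}
The tail $\ell > L$ is controlled by $\sum_{\ell > L,\, \text{powerful}} 1/\ell \ll L^{-1/2}$, combined with the trivial bound on $\pi^{\sharp}_{k+1-j}$, yielding a contribution $O_k\bigl(x(\log \log x)^k / (\log x)^{1+A/2}\bigr)$, which is negligible for $A$ large. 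Setting $B_j := \sum_{\ell \text{ powerful},\, g(\ell)=j} 1/\ell$ (convergent by partial summation from $\#\{\ell \leq y : \ell \text{ powerful}\} \ll y^{1/2}$), we obtain
\begin{equation*}
\sum_{j = 0}^{k} B_j \cdot \frac{x (\log \log x)^{k-j}}{(k-j)! \log x}\left(1 + O_k\!\left(\frac{1}{\log \log x}\right)\right).
\end{equation*}
The dominant term is $j = 0$ (carrying the highest power of $\log \log x$), with $B_0 = B$, yielding the main term of the theorem; the terms $j \geq 1$ contribute $O_k\bigl(x (\log \log x)^{k-1}/\log x\bigr)$, which fits inside the stated relative error $O_k(1/\log \log x)$.

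The main technical obstacle is the uniform Landau-type asymptotic for $\pi^{\sharp}_r(y;\ell)$ with a sufficiently good error term across the required range of $\ell$: since $\ell$ is summed up to a power of $\log x$, a naive ``fixed $\ell$'' statement of Landau's theorem does not suffice, and the dependence on the omitted primes of $\ell$ must be tracked throughout. This should follow either from a careful application of the Selberg--Delange method to the relevant Dirichlet series, or from an inductive iteration on $r$ starting from Mertens-type estimates for primes coprime to $\ell$.
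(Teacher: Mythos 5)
Your proposal is correct and follows essentially the same route as the paper: the same decomposition $n = m\ell$ with $m$ squarefree and $\ell$ powerful, the same identity $g(n) = \omega(m) + g(\ell) - 1$, the same truncation of $\ell$ at a fixed power of $\log x$, and the same constants $B_j$ with the main term coming from $g(\ell) = 0$, $B_0 = B$. The uniform Landau-type estimate you flag as the main technical obstacle is precisely the paper's Lemma~\ref{lem:landau}, proved there by the inductive subtraction over primes $p \mid \ell$ combined with Mertens' theorem that you suggest; its error term is $O(\log\log(\ell+2)/\log\log x)$ rather than a uniform $O(1/\log\log x)$, but this is harmless once summed against $1/\ell$ over powerful $\ell$.
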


\subsection*{Notation}
We employ the Landau--Bachmann ``Big Oh'' notation $O$, as well as the associated Vinogradov symbol $\ll$, with their usual meanings. 
Any dependence of the implied constants is explicitly stated.
We reserve the letter $p$ for prime numbers.

\section{Preliminaries}

Recall that a natural number $n$ is called \emph{powerful} if $p \mid n$ implies $p^2 \mid n$, for all primes $p$.
For~all $x \geq 1$, let~$\mathcal{P}(x)$ be the set of powerful numbers not exceeding $x$.

\begin{lem}\label{lem:powerful}
We have $\#\mathcal{P}(x) \ll x^{1/2}$ for every $x \geq 1$.
\end{lem}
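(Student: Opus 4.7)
The plan is to use the well-known structural description of powerful numbers: every powerful $n$ admits a unique representation $n = a^2 b^3$, where $a$ is a positive integer and $b$ is a squarefree positive integer. (To see this, collect the primes dividing $n$ with odd exponent into $b$, and put everything left over into $a^2$; uniqueness follows by comparing prime exponents.)

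Granting this, I would fix a squarefree $b$ and count the valid values of $a$: the condition $a^2 b^3 \leq x$ gives at most $(x/b^3)^{1/2} = x^{1/2}/b^{3/2}$ choices for $a$. Summing over all squarefree $b$, and trivially enlarging to a sum over all positive integers $b$, I get
\begin{equation*}
\#\mathcal{P}(x) \;\leq\; \sum_{b \,=\, 1}^{\infty} \frac{x^{1/2}}{b^{3/2}} \;=\; \zeta(3/2)\, x^{1/2} \;\ll\; x^{1/2},
\end{equation*}
which is the desired bound. (The tail $b > x^{1/3}$ contributes nothing since then $b^3 > x$, but one does not even need to truncate because $\sum b^{-3/2}$ converges.)

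There is no real obstacle here: the only ingredient beyond bookkeeping is the $a^2 b^3$ parametrization of powerful numbers, which is classical and can be cited or proved in one line via prime factorizations. I would present the proof in the order above: state the parametrization, bound the count for each $b$, then sum.
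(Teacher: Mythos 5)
Your proof is correct. The paper itself gives no argument here---it simply cites Golomb's \emph{Powerful numbers} paper---and the classical proof found there (and in Erd\H{o}s--Szekeres) is exactly your $n = a^2 b^3$ parametrization with $b$ squarefree, summed via $\sum_b b^{-3/2} = \zeta(3/2) < \infty$; that reference in fact sharpens the bound to the asymptotic $\#\mathcal{P}(x) \sim \bigl(\zeta(3/2)/\zeta(3)\bigr) x^{1/2}$, which is more than the lemma needs. Your self-contained version is complete and accurate, including the existence/uniqueness of the representation via the parity of the prime exponents.
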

\begin{proof}
See~\cite{MR0266878}.
\end{proof}

\begin{lem}\label{lem:ellsums}
We have
\begin{equation*}
\sum_{\substack{\ell \,\in\, \mathcal{P} \\ \ell \,>\, y}} \frac1{\ell} \ll \frac1{y^{1/2}} , \quad \sum_{\ell \,\in\, \mathcal{P}(y)} \frac1{\ell^{1/2}} \ll \log y,
\end{equation*}
for all $y \geq 2$.
\end{lem}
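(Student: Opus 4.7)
The plan is to deduce both estimates from Lemma~\ref{lem:powerful} by Abel (partial) summation, with the counting function $P(t) := \#\mathcal{P}(t)$ playing the role of the weight.

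For the first sum, I would apply partial summation to $\sum_{y < \ell \leq X,\, \ell \in \mathcal{P}} 1/\ell$, which gives
\begin{equation*}
\frac{P(X)}{X} - \frac{P(y)}{y} + \int_y^X \frac{P(t)}{t^2}\,dt.
\end{equation*}
Plugging in $P(t) \ll t^{1/2}$ from Lemma~\ref{lem:powerful} makes the boundary term at $X$ tend to $0$, the boundary term at $y$ is $O(y^{-1/2})$, and the integral is dominated by $\int_y^\infty t^{-3/2}\,dt \ll y^{-1/2}$. Letting $X \to \infty$ gives the claimed bound.

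For the second sum, I would again use partial summation, this time with the weight $t^{-1/2}$, obtaining
\begin{equation*}
\sum_{\ell \,\in\, \mathcal{P}(y)} \frac1{\ell^{1/2}} = \frac{P(y)}{y^{1/2}} + \frac{1}{2}\int_1^y \frac{P(t)}{t^{3/2}}\,dt.
\end{equation*}
The boundary term is $O(1)$ by Lemma~\ref{lem:powerful}, and substituting $P(t) \ll t^{1/2}$ into the integral yields $\int_1^y dt/t \ll \log y$, which dominates and gives the required bound.

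There is no real obstacle: both estimates are direct exercises in partial summation, and the only input beyond standard calculus is the powerful-number count $P(t) \ll t^{1/2}$ from Lemma~\ref{lem:powerful}. The mild technicality worth noting is merely that the square-root exponent is exactly the critical one that makes the series $\sum_\ell 1/\ell$ converge, so the tail has the same order as a single term at $y$, while the partial sum of $\ell^{-1/2}$ diverges logarithmically.
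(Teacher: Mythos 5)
Your proof is correct and follows essentially the same route as the paper: both estimates are obtained by partial summation against the counting function $\#\mathcal{P}(t) \ll t^{1/2}$ from Lemma~\ref{lem:powerful}, with the tail integral $\int_y^\infty t^{-3/2}\,dt$ giving the first bound and $\int_1^y t^{-1}\,dt$ giving the second. No issues.
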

\begin{proof}
By Lemma~\ref{lem:powerful} and by partial summation, we have
\begin{equation*}
\sum_{\substack{\ell \,\in\, \mathcal{P} \\ \ell \,>\, y}} \frac1{\ell} = \left.\frac{\#\mathcal{P}(t)}{t}\right|_{t \,=\, y}^{+\infty} + \int_y^{+\infty} \frac{\#\mathcal{P}(t)}{t^2}\,\mathrm{d} t \ll \int_y^{+\infty} \frac{\mathrm{d} t}{t^{1+1/2}} \ll \frac1{y^{1/2}} .
\end{equation*}
The proof of the second claim is similar.
\end{proof}

We need the following upper bound for the number of prime factors of a natural number.

\begin{lem}\label{lem:omega}
We have
\begin{equation*}
\omega(n) \ll \frac{\log n}{\log \log n}
\end{equation*}
for all integers $n \geq 3$.
\end{lem}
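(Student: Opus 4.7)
This is the classical upper bound on $\omega(n)$, and the strategy is to exploit the trivial observation that $n$ dominates the product of its distinct prime divisors, which is itself minimized when those divisors are the $k$ smallest primes, i.e.\ when that product is a primorial.

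First, I would set $k := \omega(n)$ and enumerate the distinct prime divisors of $n$ as $q_1 < q_2 < \cdots < q_k$. Since $q_i$ is at least the $i$-th prime $p_i$, we obtain
\[
\log n \;\geq\; \sum_{i=1}^{k} \log q_i \;\geq\; \sum_{i=1}^{k} \log p_i \;=\; \theta(p_k),
\]
where $\theta$ denotes the first Chebyshev function. Next, Chebyshev's classical lower bound $\theta(x) \gg x$, combined with the estimate $p_k \gg k \log k$ valid for $k \geq 2$ (a consequence of the prime number theorem, or of Rosser--Schoenfeld-type inequalities), yields $\log n \gg k \log k$.

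Finally, to convert $\log n \gg k \log k$ into the stated bound, I would take logarithms once more to obtain $\log k \leq \log \log n + O(1)$, and substitute back to conclude
\[
k \;\ll\; \frac{\log n}{\log k} \;\ll\; \frac{\log n}{\log \log n}.
\]
The small cases $k \in \{1,2\}$ are absorbed into the implied constant, using that $\log n / \log \log n$ stays bounded below on $n \geq 3$. There is no genuine obstacle here: the argument is textbook, and the only nontrivial ingredient is a Chebyshev-type lower bound on sums of logarithms of primes, which is standard.
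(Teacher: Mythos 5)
The paper does not actually prove this lemma; it just cites \cite[Proposition~7.10]{MR2919246}. Your strategy is the standard textbook one (and essentially the one in that reference): bound $n$ below by the primorial $p_1\cdots p_k$ with $k=\omega(n)$, use Chebyshev to get $\log n \gg k\log k$, and invert. Up to and including the estimate $\log n \gg k\log k$ your argument is correct.

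The last step, however, uses an inequality in the wrong direction. From $\log n \gg k\log k$ you deduce $\log k \le \log\log n + O(1)$, which is an \emph{upper} bound on $\log k$; but to pass from $k \ll \log n/\log k$ to $k \ll \log n/\log\log n$ you need the \emph{lower} bound $\log k \gg \log\log n$, and that is simply false for small $k$ (take $n=2^m$, where $k=1$ while $\log\log n\to\infty$). So ``substitute back'' does not work as stated, and the issue is not confined to $k\in\{1,2\}$: it affects every $k$ that is small compared with $\log\log n$. The repair is routine. Either argue by contradiction: if $k > C\log n/\log\log n$ for a large constant $C$, then $\log k \ge \tfrac12\log\log n$ for $n$ large, so $k\log k \ge \tfrac{C}{2}\log n$, contradicting $\log n \gg k\log k$ once $C$ exceeds twice the implied constant. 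Or split cases: if $k \ge \sqrt{\log n}$ then $\log k \ge \tfrac12\log\log n$ and your substitution is legitimate, while if $k < \sqrt{\log n}$ the claim is immediate because $\sqrt{\log n} \ll \log n/\log\log n$ for $n\ge 3$. With either patch the proof is complete and matches the standard argument.
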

\begin{proof}
See, e.g.,~\cite[Proposition~7.10]{MR2919246}.
\end{proof}

For every $x \geq 1$ and every positive integer $h$, let $Q(x;h)$ denote the number of squarefree numbers not exceeding $x$ and relatively prime with $h$.

\begin{lem}\label{lem:squarefree}
We have
\begin{equation*}
Q(x;h) = \frac{6}{\pi^2}\frac{h}{\psi(h)} \,x + O\!\left(4^{\omega(h)}(x^{1/2} + 1)\right)
\end{equation*}
for all $x \geq 1$ and all positive integers $h$.
\end{lem}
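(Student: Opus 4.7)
The plan is to reduce $Q(x;h)$ to a short divisor sum via two Möbius inversions, one for the squarefree condition and one for the coprimality condition, and then extend the resulting Euler product to infinity.

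The first step is to use $\mu^2(n) = \sum_{d^2 \mid n} \mu(d)$ and interchange the order of summation:
\begin{equation*}
Q(x;h) = \sum_{\substack{n \,\leq\, x \\ \gcd(n,h) \,=\, 1}} \mu^2(n) = \sum_{\substack{d \,\leq\, x^{1/2} \\ \gcd(d,h) \,=\, 1}} \mu(d)\, \#\{m \leq x/d^2 : \gcd(m,h) = 1\},
\end{equation*}
where the restriction $\gcd(d,h) = 1$ is automatic because $d^2 \mid n$ and $\gcd(n,h) = 1$ force it. For the inner count I would apply the standard inclusion--exclusion
\begin{equation*}
\#\{m \leq y : \gcd(m,h) = 1\} = \sum_{e \,\mid\, h} \mu(e)\, \lfloor y/e \rfloor = \frac{\varphi(h)}{h}\, y + O(2^{\omega(h)}),
\end{equation*}
valid for $y \geq 0$.

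Substituting this back gives
\begin{equation*}
Q(x;h) = \frac{\varphi(h)}{h}\, x \sum_{\substack{d \,\leq\, x^{1/2} \\ \gcd(d,h) \,=\, 1}} \frac{\mu(d)}{d^2} + O\!\left(2^{\omega(h)}(x^{1/2} + 1)\right).
\end{equation*}
Completing the sum over $d$ to infinity introduces a tail bounded by $x \sum_{d > x^{1/2}} 1/d^2 \ll x^{1/2}$. The completed sum factors as an Euler product
\begin{equation*}
\sum_{\substack{d \,\geq\, 1 \\ \gcd(d,h) \,=\, 1}} \frac{\mu(d)}{d^2} = \prod_{p \,\nmid\, h} \left(1 - \frac1{p^2}\right) = \frac{6/\pi^2}{\prod_{p \,\mid\, h}(1 - 1/p^2)}.
\end{equation*}
Combining with the prefactor $\varphi(h)/h = \prod_{p \mid h}(1 - 1/p)$ and using $\psi(h)/h = \prod_{p \mid h}(1 + 1/p)$, the main constant telescopes to $\tfrac{6}{\pi^2} \cdot \tfrac{h}{\psi(h)}$, as desired.

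Since $2^{\omega(h)} \leq 4^{\omega(h)}$, the error term in the statement is (comfortably) met, and there is no real obstacle: the argument is a textbook double Möbius inversion, and the only thing to verify carefully is that all the Euler product manipulations give the correct constant, which comes down to the identity $\frac{\varphi(h)/h}{\prod_{p \mid h}(1 - 1/p^2)} = \frac{h}{\psi(h)}$. Nothing subtle is needed regarding the range $x \geq 1$ because the $+1$ in $x^{1/2} + 1$ absorbs any trivial cases where the main term itself is $O(1)$.
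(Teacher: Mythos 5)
Your proof is correct, and it is genuinely self-contained where the paper is not: the paper simply cites Hazlewood's result on $k$-free integers with small prime factors (``it follows easily from [Hazlewood, Eq.~8]'') and gives no argument at all. Your double M\"obius inversion is the standard textbook route and every step checks out: the vanishing of the terms with $\gcd(d,h)>1$, the inner count $\frac{\varphi(h)}{h}y + O(2^{\omega(h)})$, the tail $x\sum_{d>x^{1/2}}d^{-2}\ll x^{1/2}$, and the telescoping
\begin{equation*}
\frac{\varphi(h)}{h}\cdot\frac{1}{\prod_{p\mid h}(1-1/p^2)}=\prod_{p\mid h}\frac{1}{1+1/p}=\frac{h}{\psi(h)}
\end{equation*}
are all as they should be. In fact your argument yields the error term $O\!\left(2^{\omega(h)}(x^{1/2}+1)\right)$, which is slightly sharper than the $4^{\omega(h)}$ stated in the lemma (the larger constant presumably being an artifact of specializing Hazlewood's more general $k$-free estimate). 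What the citation buys the author is brevity and a ready-made uniform statement; what your proof buys is transparency, a marginally better bound, and no dependence on an external reference. Nothing in the rest of the paper is affected, since the lemma is only used with $4^{\omega(\ell)}\ll_\varepsilon x^{\varepsilon/4}$, where the distinction between $2^{\omega}$ and $4^{\omega}$ is immaterial.
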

\begin{proof}
It follows easily from \cite[Eq.~8]{MR0374056}.
\end{proof}

For every $x \geq 1$ and every positive integers $s,h$, let $Q_s(x;h)$ denote the number of squarefree numbers not exceeding $x$, having exactly $s$ prime factors, and relatively prime with $h$.

\begin{lem}\label{lem:landau}
We have
\begin{equation*}
Q_s(x;h) = \frac{x(\log \log x)^{s-1}}{(s-1)!\log x}\left(1 + O_{\delta,s}\!\left(\frac{\log \log (h+2)}{\log \log x}\right)\right)
\end{equation*}
for all $x \geq 3$, $0 < \delta < 1$, and for all integers $1 \leq h \leq x^{\delta}$ and $s \geq 1$.
\end{lem}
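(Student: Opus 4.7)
I would argue by induction on $s$. For the base case $s = 1$, $Q_1(x;h)$ equals $\pi(x)$ minus the at most $\omega(h)$ primes dividing $h$. The prime number theorem $\pi(x)=x/\log x+O(x/(\log x)^2)$, together with Lemma~\ref{lem:omega} (which gives $\omega(h)\ll\log h/\log\log h\ll\log x/\log\log x$ under $h\le x^\delta$), yields the required asymptotic, since $\log\log(h+2)/\log\log x\gg 1/\log\log x$.

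For the inductive step, I would apply the Selberg-type double-counting identity
\begin{equation*}
s\,Q_s(x;h)\;=\;\sum_{\substack{p\,\leq\,x\\ p\,\nmid\,h}} Q_{s-1}(x/p;\,hp) ,
\end{equation*}
obtained by counting the incidence pairs $(p,n)$ with $n$ counted by $Q_s(x;h)$ and $p\mid n$. Choosing $\delta'\in(\delta,1)$ and setting $y:=x^{(\delta'-\delta)/(1+\delta')}$, one checks that $hp\le(x/p)^{\delta'}$ for every $p\le y$, so the inductive hypothesis applies to $Q_{s-1}(x/p;hp)$; for the tail $y<p\le x$ I would invoke the uniform Hardy--Ramanujan bound $\#\{n\le z:\omega(n)=s-1\}\ll_{s} z(\log\log z+2)^{s-2}/\log z$. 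Substituting the inductive asymptotic, a Mertens-and-partial-summation calculation evaluates
\begin{equation*}
\sum_{\substack{p\,\leq\,y\\ p\,\nmid\,h}}\frac{(\log\log(x/p))^{s-2}}{p\,\log(x/p)} \;=\; \frac{s}{s-1}\cdot\frac{(\log\log x)^{s-1}}{\log x}\Bigl(1+O_{\delta,s}(1/\log\log x)\Bigr) ,
\end{equation*}
while the Hardy--Ramanujan tail contributes only $O_s(x(\log\log x)^{s-2}/\log x)$. Dividing by $s$ produces the target main term $x(\log\log x)^{s-1}/((s-1)!\log x)$.

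The main obstacle I foresee is propagating the inductive error $O(\log\log(hp+2)/\log\log(x/p))$ picked up at each prime $p$ to the target shape $O(\log\log(h+2)/\log\log x)$: in general the factor $\log\log(hp+2)$ could exceed $\log\log(h+2)$ substantially. I would handle this by splitting the sum at $p=h$: for $p\le h$ we have $\log\log(hp+2)\ll\log\log(h+2)$ and $\log\log(x/p)\asymp\log\log x$, giving the correct relative error; for $h<p\le y$, the inflation of $\log\log(hp+2)$ up to $O(\log\log x)$ is harmless because the summed contribution of those primes is only $O(1/\log\log x)$ times the main term, as a reweighting of the partial-summation integral via Mertens' theorem shows.
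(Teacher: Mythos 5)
Your base case is fine and the double-counting identity $s\,Q_s(x;h)=\sum_{p\le x,\ p\nmid h}Q_{s-1}(x/p;hp)$ is correct, but the inductive step contains a genuine quantitative error. For $y=x^c$ with $c<1$ fixed, one has
\begin{equation*}
\sum_{p\,\le\, y}\frac{(\log\log(x/p))^{s-2}}{p\log(x/p)}=\frac{(\log\log x)^{s-1}}{\log x}\Bigl(1+O_{c,s}\bigl(1/\log\log x\bigr)\Bigr),
\end{equation*}
with leading coefficient $1$, not $s/(s-1)$: writing $\frac1{\log(x/p)}=\frac1{\log x}+\frac{\log p}{\log x\,\log(x/p)}$, the correction contributes only $O(\log y/(\log x)^2)$, so the sum is essentially $\frac{(\log\log x)^{s-2}}{\log x}\sum_{p\le y}\frac1p$. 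The missing $\frac1{s-1}\cdot\frac{(\log\log x)^{s-1}}{\log x}$ needed to reach your claimed $\frac{s}{s-1}$ comes precisely from the primes $y<p\le x$ (substitute $p=x^u$ and note that $\int \frac{du}{u(1-u)}$ picks up a second $\log\log x$ from the endpoint $u=1$). Correspondingly, your tail estimate is wrong: already for $s=2$ the tail is $\sum_{y<p\le x}Q_1(x/p;hp)\asymp x\log\log x/\log x$, a positive proportion of the main term, not $O_s(x(\log\log x)^{s-2}/\log x)$. So the tail can neither be discarded nor evaluated by your induction hypothesis (for $p>y$ one has $hp>(x/p)^{\delta'}$ and $x/p$ may be bounded), and as structured the argument outputs the main term with the wrong constant plus an unevaluated term of the same order. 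The issue you flagged about $\log\log(hp+2)$ versus $\log\log(h+2)$ is real but secondary to this.

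The paper takes a much shorter route that sidesteps re-proving Landau's theorem with a coprimality condition: it quotes Landau's classical result for $Q_s(x;1)$ and only estimates the difference, via
\begin{equation*}
0\le Q_s(x;1)-Q_s(x;h)\le\sum_{p\,\mid\, h}Q_{s-1}(x/p;1)\ll_{\delta,s}\frac{x(\log\log x)^{s-2}}{\log x}\sum_{p\,\mid\, h}\frac1p ,
\end{equation*}
and then $\sum_{p\mid h}1/p\ll\log\log(h+2)$ by Mertens' second theorem, which is exactly the required error. If you want to keep an inductive proof of Landau's theorem itself, the standard fix is to weight the identity by $\log p$ (or to evaluate the full sum over all $p\le x$ using uniform Hardy--Ramanujan bounds in the range where the induction hypothesis fails), but for this lemma the reduction to the $h=1$ case is both sufficient and far simpler.
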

\begin{proof}
For $s = 1$ the claim follows from the Prime Number Theorem, while for $h = 1$ the claim is a classic result of Landau~\cite{MR1504359}.
Hence, suppose $s,h > 1$.
Also, we can assume $x \geq 3^{1/(1-\delta)}$.
If $n \leq x$ is a squarefree number having exactly $s$ prime factors and such that $(n, h) > 1$, then $n = pn^\prime$ where $p$ is a prime number dividing $h$ and $n^\prime \leq x/p$ is a squarefree number having exactly $s - 1$ prime factor.
Therefore,
\begin{align*}
0 &\leq Q_s(x;1) - Q_s(x;h) \leq \sum_{p \,\mid\, h} Q_{s-1}\!\left(\frac{x}{p}, 1\right) \ll_s \sum_{p \,\mid\, h}\frac{x}{p}\frac{(\log \log (x/p))^{s-2}}{\log(x/p)} \\
&\ll_{\delta} \frac{x(\log \log x)^{s-2}}{\log x}\sum_{p \,\mid\, h}\frac1{p} \ll \frac{x(\log \log x)^{s-1}}{\log x}\frac{\log \log (h+2)}{\log \log x} ,
\end{align*}
where we used the fact that $p \leq x^{\delta}$ and Mertens' second theorem~\cite[Theorem~4.5]{MR2919246}.
Consequently,
\begin{align*}
Q_s(x;h) &= Q_s(x;1) + O_{\delta,s}\!\left(\frac{x(\log \log x)^{s-1}}{\log x}\frac{\log \log (h+2)}{\log \log x}\right) \\
&= \frac{x(\log \log x)^{s-1}}{(s-1)!\log x} + O_{\delta,s}\!\left(\frac{x(\log \log x)^{s-1}}{\log x}\frac{\log \log (h+2)}{\log \log x}\right) ,
\end{align*}
as claimed.
\end{proof}

Finally, we need a lemma about certain sums of powers.

\begin{lem}\label{lem:x1xk}
Let $a_0$ be an integer.
For all $x_1, \dots, x_k > 1$ we have
\begin{equation*}
\sum_{a_0 \,<\, a_1 \,<\, \cdots \,<\, a_k} \frac1{x_1^{a_1} \cdots x_k^{a_k}} = \frac1{(x_1 \cdots x_k)^{a_0}} \prod_{j \,=\, 1}^k \frac1{x_j \cdots x_k - 1} ,
\end{equation*}
where the sum is over all integers $a_1, \dots, a_k$ satisfying $a_0 \,<\, a_1 \,<\, \cdots \,<\, a_k$.
\end{lem}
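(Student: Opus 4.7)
The plan is to decouple the nested sum into a product of independent geometric series via a change to gap variables. Set $b_j := a_j - a_{j-1} - 1$ for $j = 1, \dots, k$, so that the chain of strict inequalities $a_0 < a_1 < \cdots < a_k$ becomes the unconstrained condition $b_j \geq 0$, with $a_j = a_0 + \sum_{i \leq j}(1 + b_i)$.

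Substituting this expression for $a_j$ and swapping the order of summation in the exponent (replacing $\sum_{j} \sum_{i \leq j}$ with $\sum_{i} \sum_{j \geq i}$), one checks that
\[
\prod_{j=1}^{k} x_j^{a_j} = (x_1 \cdots x_k)^{a_0} \prod_{i=1}^{k} (x_i x_{i+1} \cdots x_k)^{b_i + 1}.
\]
Since each $x_j > 1$, every ratio $1/(x_i \cdots x_k)$ lies strictly between $0$ and $1$, so the whole multiple series converges absolutely and factors as a product of $k$ independent geometric series in the $b_i$. Evaluating
\[
\sum_{b_i = 0}^{\infty} (x_i \cdots x_k)^{-(b_i + 1)} = \frac{1}{x_i \cdots x_k - 1}
\]
and multiplying over $i = 1, \dots, k$ yields precisely the right-hand side of the claimed identity.

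There is no genuine obstacle here: the argument is essentially bookkeeping, and the only points requiring attention are the exponent rearrangement and the appeal to absolute convergence to justify the factorization into independent geometric series. If one prefers to avoid the combinatorial manipulation, an alternative is induction on $k$: the base case $k = 1$ reduces to a single geometric series, and the inductive step proceeds by summing over $a_k > a_{k-1}$ first, which contributes a factor $1/(x_k^{a_{k-1}}(x_k - 1))$, and then invoking the inductive hypothesis with the variable $x_{k-1}$ replaced by the product $x_{k-1} x_k$ (noting $x_{k-1} x_k > 1$, so the hypothesis applies).
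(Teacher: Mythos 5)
Your argument is correct. Your primary route --- passing to the gap variables $b_j = a_j - a_{j-1} - 1 \geq 0$, rewriting
\begin{equation*}
\prod_{j=1}^{k} x_j^{a_j} = (x_1 \cdots x_k)^{a_0} \prod_{i=1}^{k} (x_i \cdots x_k)^{b_i+1}
\end{equation*}
by swapping the order of the double product, and then factoring the sum into $k$ independent geometric series --- is a genuinely different (and arguably more illuminating) proof than the paper's, which proceeds by induction on $k$: the base case is a single geometric series, and the inductive step sums over $a_{k+1} > a_k$ first and absorbs the resulting factor $x_{k+1}^{-a_k}$ into the $x_k$-variable, i.e.\ applies the hypothesis with $x_k$ replaced by $x_k x_{k+1}$. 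Your substitution exposes the product structure in one stroke and makes clear why the factors $x_j \cdots x_k - 1$ (rather than $x_j - 1$) appear, at the cost of having to justify the rearrangement; since all terms are positive and each $x_i \cdots x_k > 1$, Tonelli's theorem does this immediately, as you note. The inductive alternative you sketch at the end is essentially the paper's proof verbatim (modulo which end of the chain one sums first), so either version of your write-up would be acceptable; the two approaches trade the multi-index bookkeeping of the substitution against the recursion bookkeeping of the induction.
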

\begin{proof}
We proceed by induction on $k$.
For $k = 1$, we have
\begin{equation}\label{equ:x1a1}
\sum_{a_0 \,<\, a_1} \frac1{x_1^{a_1}} = \frac1{x_1^{a_0 + 1}} \sum_{d \,=\, 0}^\infty \frac1{x_1^d} = \frac1{x_1^{a_0}} \frac1{x_1 - 1} ,
\end{equation}
as claimed.
Supposing that the claim is true for $k$, we shall prove it for $k + 1$.
We have
\begin{align*}
\sum_{a_0 \,<\, \cdots \,<\, a_{k+1}}& \frac1{x_1^{a_1} \cdots x_{k+1}^{a_{k+1}}} = \sum_{a_0 \,<\, \cdots \,<\, a_k} \frac1{x_1^{a_1} \cdots x_k^{a_k}} \sum_{a_k \,<\, a_{k+1}} \frac1{x_{k+1}^{a_{k+1}}} \\
&= \sum_{a_0 \,<\, \cdots \,<\, a_{k+1}} \frac1{x_1^{a_1} \cdots x_{k-1}^{a_{k-1}} (x_k x_{k+1})^{a_k}}  \frac1{x_{k+1} - 1} \\
&= \frac1{(x_1 \cdots x_{k+1})^{a_0}} \prod_{j \,=\, 1}^k \frac1{x_j \cdots x_{k+1} - 1} \frac1{x_{k+1}-1} \\
&= \frac1{(x_1 \cdots x_{k+1})^{a_0}} \prod_{j \,=\, 1}^{k+1} \frac1{x_j \cdots x_{k+1} - 1} ,
\end{align*}
where we used~\eqref{equ:x1a1}, with $a_0$ and $x_1$ replaced respectively by $a_k$ and $x_{k+1}$, and the induction hypothesis.
\end{proof}

\section{Proof of Theorem~\ref{thm:fdistribution}}

We begin by proving that for each positive integer $k$ there exists $A_k > 0$ such that
\begin{equation}\label{equ:Nkxdef}
N_k(x) := \#\{n \leq x : f(n) = k\} = A_k x + O_\varepsilon(x^{1/2 + \varepsilon/2}) ,
\end{equation}
for all $x \geq 1$ and $\varepsilon > 0$.
Clearly, every natural number $n$ can be written in a unique way as $n = m\ell$, where $m$ is a squarefree number, $\ell$ is a powerful number, and $(m,\ell) = 1$.
If $m = 1$ then $n = \ell$ is powerful and, by Lemma~\ref{lem:powerful}, belongs to a set of cardinality $O(x^{1/2})$.
If $m > 1$ then $f(n) = k$ is equivalent to $f(\ell) = k - 1$.
Also, for each $\ell$ there are exactly $Q(x/\ell;\ell) - 1$ choices for $m > 1$.
Therefore, we have
\begin{equation}\label{equ:Nkx}
N_k(x) = \sum_{\substack{\ell \,\in\, \mathcal{P}(x) \\ f(\ell) \,=\, k - 1}} \left(Q\!\left(\frac{x}{\ell}; \ell\right) - 1\right) + O(x^{1/2}) ,
\end{equation}
for all $x \geq 1$.
For each positive integer $\ell \leq x$, Lemma~\ref{lem:omega} gives $4^{\omega(\ell)} \ll_\varepsilon x^{\varepsilon/4}$.
Consequently, by Lemma~\ref{lem:squarefree}, we obtain
\begin{equation}\label{equ:Qxll}
Q\!\left(\frac{x}{\ell};\ell\right) = \frac{6}{\pi^2}\frac{x}{\psi(\ell)} + O_\varepsilon\!\left(\frac{x^{1/2+\varepsilon/4}}{\ell^{1/2}}\right) ,
\end{equation}
for all positive integers $\ell \leq x$.
By Lemma~\ref{lem:ellsums}, we have
\begin{equation}\label{equ:part1}
\sum_{\substack{\ell \,\in\, \mathcal{P} \\ \ell \,>\, x}} \frac1{\psi(\ell)} < \sum_{\substack{\ell \,\in\, \mathcal{P} \\ \ell \,>\, x}} \frac1{\ell} \ll \frac1{x^{1/2}} ,
\end{equation}
for all $x \geq 1$.
In particular, the series
\begin{equation}\label{equ:Akfell}
A_k := \frac{6}{\pi^2}\sum_{\substack{\ell \,\in\, \mathcal{P} \\ f(\ell) \,=\, k - 1}} \frac1{\psi(\ell)}
\end{equation}
converges.
Also, again by Lemma~\ref{lem:ellsums}, we have
\begin{equation}\label{equ:part2}
\sum_{\ell \,\in\, \mathcal{P}(x)} \frac1{\ell^{1/2}} \ll \log x \ll_\varepsilon x^{\varepsilon / 4} .
\end{equation}
At this point, putting together~\eqref{equ:Nkx} and~\eqref{equ:Qxll}, and using~\eqref{equ:part1} and~\eqref{equ:part2}, we obtain
\begin{align*}
N_k(x) &= \sum_{\substack{\ell \,\in\, \mathcal{P}(x) \\ f(\ell) \,=\, k - 1}} \left(\frac{6}{\pi^2}\frac{x}{\psi(\ell)} + O_\varepsilon\!\left(\frac{x^{1/2+\varepsilon/4}}{\ell^{1/2}}\right)\right) + O(x^{1/2}) \\
&= A_k x + O\!\left(\sum_{\substack{\ell \,\in\, \mathcal{P} \\ \ell \,>\, x}} \frac{x}{\psi(\ell)}\right) + O_\varepsilon\!\left(\sum_{\ell \,\in\, \mathcal{P}(x)} \frac{x^{1/2+\varepsilon/4}}{\ell^{1/2}}\right) + O(x^{1/2}) \\
&= A_k x + O_\varepsilon(x^{1/2 + \varepsilon/2}) ,
\end{align*}
as desired.
Thus~\eqref{equ:Nkxdef} is proved.

Now we shall show that
\begin{equation}\label{equ:Akbound}
A_k \leq \frac{6}{\pi^2}\frac1{(k-1)!}
\end{equation}
for all positive integers $k$.
For $k = 1$ the claim is obvious since $A_1 = 6 / \pi^2$.
Hence, assume $k \geq 2$.
If $\ell$ is a powerful number such that $f(\ell) = k-1$, then $\ell = m_1^{a_1} \cdots m_{k-1}^{a_{k - 1}}$ for some integers $m_1, \dots, m_{k-1} \geq 2$ and $2 \leq a_1 < \cdots < a_{k-1}$.
Consequently,
\begin{align*}
\frac{\pi^2}{6} A_k &= \sum_{\substack{\ell \,\in\, \mathcal{P} \\ f(\ell) \,=\, k-1}} \frac1{\psi(\ell)} < \sum_{\substack{\ell \,\in\, \mathcal{P} \\ f(\ell) \,=\, k-1}} \frac1{\ell} < \prod_{j \,=\, 1}^{k - 1} \sum_{m \,=\, 2}^\infty \sum_{a \,=\, j + 1}^\infty \frac1{m^a} \\
&= \prod_{j \,=\, 1}^{k - 1} \sum_{m \,=\, 2}^\infty \frac1{m^j(m - 1)} \leq \prod_{j \,=\, 1}^{k - 1} \frac1{j} = \frac1{(k-1)!} ,
\end{align*}
where we used the facts that
\begin{equation*}
\sum_{m \,=\, 2}^\infty \frac1{m(m - 1)} = \sum_{m \,=\, 2}^\infty \left(\frac1{m-1}-\frac1{m}\right) = 1
\end{equation*}
and
\begin{align*}
\sum_{m \,=\, 2}^\infty \frac1{m^j(m - 1)} &< \frac1{2^j} + \frac1{3^j\cdot 2} + \sum_{n \,=\, 3}^\infty \frac1{n^{j+1}} \\
&< \frac1{2^j} + \frac1{3^j\cdot 2} + \int_2^{+\infty} \frac{\mathrm{d}t}{t^{j+1}} = \frac1{2^j} + \frac1{3^j\cdot 2} + \frac1{j2^j} < \frac1{j} ,
\end{align*}
for all integers $j \geq 2$.
Thus~\eqref{equ:Akbound} is proved.

Now let $\phi$ be an arithmetic function satisfying $|\phi(k)| < a^k$ for all positive integers $k$, where $a > 1$ is some constant.
From~\eqref{equ:Akbound} it follows that series~\eqref{equ:Mphi} converges.
Define 
\begin{equation*}
y := 2a + \lfloor C \log x / \log \log (x+2) \rfloor,
\end{equation*}
where $C > 0$ is some absolute constant.
Since $f(n) \leq \omega(n)$ for all positive integers $n$, by Lemma~\ref{lem:omega}, we can choose $C$ sufficiently large so that $f(n) \leq y$ for all natural numbers $n \leq x$.
Moreover, from~\eqref{equ:Akbound} and $y \geq 2a$, we get that
\begin{equation}\label{equ:Akphitail}
\sum_{k \,>\, y} A_k \phi(k) \ll \sum_{k \,>\, y} \frac{a^k}{(k-1)!} < \frac{a^{y+1}}{y!} \sum_{j \,=\, 0}^{\infty} \left(\frac{a}{y}\right)^j \ll_a \frac{a^y}{y!} \ll_a \frac1{x^{1/2}} ,
\end{equation}
and
\begin{equation}\label{equ:ayy}
a^y y \ll_{a,\varepsilon} x^{\varepsilon / 2} ,
\end{equation}
for all $x \geq 1$.
Therefore, putting together~\eqref{equ:Nkxdef}, \eqref{equ:Akphitail}, and~\eqref{equ:ayy}, we have
\begin{align*}
\sum_{n \,\leq\, x} \phi(f(n)) &= \sum_{k \,\leq\, y} N_k(x) \phi(k) = \sum_{k \,\leq\, y} \left(A_k \phi(k) x + O_\varepsilon(\phi(k)x^{1/2 + \varepsilon/2})\right) \\
&= M_\phi x + O\!\left(\sum_{k \,>\, y} A_k \phi(k) x \right) + O_\varepsilon(a^y y x^{1/2 + \varepsilon/2}) = M_\phi x + O_{a,\varepsilon}(x^{1/2 + \varepsilon}) ,
\end{align*}
for all $x \geq 1$ and $\varepsilon > 0$.
The proof is complete.

\section{Proof of Theorem~\ref{thm:Ak}}

Recall that $A_k$ is defined by~\eqref{equ:Akfell}.
For $k = 1$ the claim is obvious, since $f(\ell) = 0$ if and only if $\ell = 1$.
Hence, assume $k \geq 2$.
If $\ell$ is a powerful number such that $f(\ell) = k - 1$, then $\ell$ can be written in a unique way as $\ell = m_1^{a_1} \cdots m_{k-1}^{a_{k-1}}$, where $1 < a_1 < \cdots < a_{k-1}$ are integers and $m_1, \dots, m_{k-1} > 1$ are pairwise coprime squarefree numbers.
Therefore, from~\eqref{equ:Akfell} and Lemma~\ref{lem:x1xk} we obtain
\begin{align*}
\frac{\pi^2}{6}A_k &= \sum_{m_1, \dots, m_{k-1}} \sum_{1 \,<\, a_1 \,<\, \cdots \,<\, a_{k-1}} \frac1{\psi(m_1^{a_1} \cdots m_{k-1}^{a_{k-1}})} \\
&= \sum_{m_1, \dots, m_{k-1}} \frac{m_1 \cdots m_{k-1}}{\psi(m_1 \cdots m_{k-1})} \sum_{1 \,<\, a_1 \,<\, \cdots \,<\, a_{k-1}} \frac1{m_1^{a_1} \cdots m_{k-1}^{a_{k-1}}} \\
&= \sum_{m_1, \dots, m_{k-1}} \frac1{\psi(m_1 \cdots m_{k-1})} \prod_{j \,=\, 1}^{k-1} \frac1{m_j \cdots m_{k-1} - 1} ,
\end{align*}
where, here and for the rest of the proof, in summation subscripts $m_1, \dots, m_{k-1}$ are meant to be pairwise coprime, squarefree, and greater than $1$.
At this point, it is enough to prove that
\begin{equation*}
\sum_{n \,=\, m_1 \cdots m_{k-1}} \prod_{j \,=\, 1}^{k-1} \frac1{m_j \cdots m_{k-1} - 1} = \rho_k(n)
\end{equation*}
for all squarefree numbers $n > 1$.
We proceed by induction on $k$.
For $k = 2$, the claim is true since
\begin{equation*}
\frac1{n - 1} = \frac{\rho_1(1)}{n - 1} = \frac1{n - 1}\sum_{\substack{d \,\mid\, n \\ d \,<\, n}} \rho_1(d) = \rho_2(n) ,
\end{equation*}
for all squarefree numbers $n > 1$.
Assuming that the claim is true for $k$, we shall prove it for $k + 1$.
We have
\begin{align*}
\sum_{n \,=\, m_1 \cdots m_k}& \prod_{j \,=\, 1}^{k} \frac1{m_j \cdots m_{k} - 1} = \frac1{n - 1} \sum_{m_1 \,\mid\, n} \sum_{n/m_1 \,=\, m_2 \cdots m_k} \prod_{j \,=\, 2}^k \frac1{m_j \cdots m_{k} - 1} \\
&= \frac1{n - 1} \sum_{m_1 \,\mid\, n} \rho_k(n / m_1) = \frac1{n - 1} \sum_{\substack{d \,\mid\, n \\ d \,<\, n}} \rho_k(d) = \rho_{k+1}(n) ,
\end{align*}
for all squarefree numbers $n > 1$, as desired.
The proof is complete.

\section{Proof of Theorem~\ref{thm:gdistribution}}

We have to count the number of positive integers $n \leq x$ such that $g(n) = k$.
As in the proof of Theorem~\ref{thm:fdistribution}, every $n$ can be written in a unique way as $n = m\ell$, where $m$ is a squarefree number, $\ell$ is a powerful number, and $(m,\ell) = 1$.
If $m = 1$ then $n = \ell$ is powerful and, by Lemma~\ref{lem:powerful}, belongs to a set of cardinality $O(x^{1/2})$.
If $m > 1$ then 
\begin{equation*}
\omega(m) = \omega(n) - \omega(\ell) = g(n) + f(n) - f(\ell) - g(\ell) = k + 1 - g(\ell).
\end{equation*}
In particular, $1 \leq \omega(m) \leq k + 1$.
Assume $x$ sufficiently large, and put $y := (\log x)^2$.
Then, by Lemma~\ref{lem:ellsums}, the number of $n \leq x$ such that $\ell > y$ is at most
\begin{equation*}
\sum_{\substack{\ell \,\in\, \mathcal{P} \\ \ell \,>\, y}} \frac{x}{\ell} \ll \frac{x}{y^{1/2}} = \frac{x}{\log x} .
\end{equation*}
Therefore,
\begin{equation}\label{equ:MkQs}
M_k(x) := \#\{n \leq x : g(n) = k\} = \sum_{s \,=\, 1}^{k + 1} \sum_{\substack{\ell \,\in\,\mathcal{P}(y) \\ g(\ell) \,=\, k + 1 - s}} Q_s\!\left(\frac{x}{\ell};\ell\right) + O\!\left(\frac{x}{\log x}\right) .
\end{equation}
For each nonnegative integer $r$, put
\begin{equation*}
B_r := \sum_{\substack{\ell \,\in\, \mathcal{P} \\ g(\ell) \,=\, r}} \frac1{\ell} .
\end{equation*}
Note that, in light of Lemma~\ref{lem:ellsums}, the series defining $B_r$ converges and, more precisely,
\begin{equation}\label{equ:Brtail}
\sum_{\substack{\ell \,\in\, \mathcal{P}(y) \\ g(\ell) \,=\, r}} \frac1{\ell} = B_r + O\!\left(\frac1{y^{1/2}}\right) = B_r + O\!\left(\frac1{\log x}\right).
\end{equation}
Clearly, we can assume $x$ sufficiently large so that $x / y \geq 3$ and $y \leq x^{\delta/(1+\delta)}$, for some fixed $0 < \delta < 1$.
Hence, applying Lemma~\ref{lem:landau} we obtain
\begin{align*}
Q_s\!\left(\frac{x}{\ell};\ell\right) &= \frac{x(\log \log (x/\ell))^{s-1}}{\ell(s-1)!\log (x / \ell)}\left(1 + O_k\!\left(\frac{\log \log (\ell+2)}{\log \log (x/\ell)}\right)\right) \\
&= \frac{x(\log \log x)^{s-1}}{\ell(s-1)!\log x}\left(1 + O_k\!\left(\frac{\log \ell}{\log x}\right)\right)\left(1 + O_k\!\left(\frac{\log \log (\ell+2)}{\log \log x}\right)\right) \\
&= \frac{x(\log \log x)^{s-1}}{\ell(s-1)!\log x}\left(1 + O_k\!\left(\frac{\log (\ell+1)}{\log \log x}\right)\right) ,
\end{align*}
for all positive integers $s \leq k + 1$ and $\ell \leq y$.
Consequently,
\begin{align}\label{equ:sumQs}
\sum_{\substack{\ell \,\in\, \mathcal{P}(y) \\ g(\ell) \,=\, k + 1 - s}} Q_s\!\left(\frac{x}{\ell};\ell\right) &= \frac{x(\log \log x)^{s-1}}{(s-1)!\log x}\sum_{\substack{\ell \,\in\, \mathcal{P}(y) \\ g(\ell) \,=\, k + 1 - s}}\frac1{\ell}\left(1 + O_k\!\left(\frac{\log (\ell + 1)}{\log \log x}\right)\right) \\
&= \frac{x(\log \log x)^{s-1}}{(s-1)!\log x}\left(B_{k+1-s} + O\!\left(\frac1{\log x}\right) + O_k\!\left(\frac1{\log \log x}\right)\right) \nonumber\\
&= \frac{x(\log \log x)^{s-1}}{(s-1)!\log x}\left(B_{k+1-s} + O_k\!\left(\frac1{\log \log x}\right)\right) , \nonumber
\end{align}
where we used~\eqref{equ:Brtail} and the fact that the series
\begin{equation*}
\sum_{\ell \,\in\, \mathcal{P}}\frac{\log (\ell+1)}{\ell} 
\end{equation*}
converges.
Thus, putting together~\eqref{equ:MkQs} and~\eqref{equ:sumQs}, and noting that $B_0 = B$, we obtain
\begin{equation*}
M_k(x) = \frac{B x(\log \log x)^k}{k!\log x}\left(1 + O_k\!\left(\frac1{\log \log x}\right)\right) ,
\end{equation*}
as desired.
The proof is complete.

\bibliographystyle{amsplain}
\providecommand{\bysame}{\leavevmode\hbox to3em{\hrulefill}\thinspace}

\end{document}